\newtheorem{theorem}{Theorem}[section]
\newtheorem{question}[theorem]{Question}
\newtheorem{proposition}[theorem]{Proposition}
\newtheorem{lemma}[theorem]{Lemma}
\theoremstyle{definition}
\theoremstyle{remark}
\theoremstyle{plain}
\newcommand{\thistheoremname}{}
\newtheorem{genericthm}[theorem]{\thistheoremname}
\newtheorem*{genericthm*}{\thistheoremname}
\newenvironment{namedthm*}[1]
  {\renewcommand{\thistheoremname}{#1}%
   \begin{genericthm*}}
  {\end{genericthm*}}
\newcommand\cM{\mathcal{M}}
\newcommand{\bR}{\mathbb{R}}
\newcommand{\bH}{\mathbb{H}}
\newcommand{\bC}{\mathbb{C}}
\newcommand{\bZ}{\mathbb{Z}}
\newcommand{\bRP}{\mathbb{RP}}
\newcommand{\bCP}{\mathbb{CP}}
\newcommand{\fg}{\mathfrak{g}}
\newcommand{\on}{\operatorname}
\newcommand\id{\on{id}}
\newcommand{\re}{\on{re}}
\newcommand{\im}{\on{im}}
\newcommand{\inte}{{\on{int}}}
\newcommand{\Cl}{{\on{Cl}}}
\newcommand{\reduc}{{\hspace{-0.15em}/\!\!/\hspace{-0.05em}}}
\newcommand{\AC}{{\on{AC}}}
\newcommand\qu{/\kern-.7ex/} 
\newcommand\lqu{\backslash \kern-.7ex \backslash}
\newcommand{\ol}{\overline}
\newcommand{\ul}{\underline}
\newcommand{\wt}{\widetilde}
\def\d{\delta}
\def\om{\omega}
\renewcommand{\d}{{\rm d}}
\def\dt{{\rm d}t}
\newcounter{qcounter}
\newcommand\quotient[2]{
        \mathchoice
            {% \displaystyle
                \text{\raise1ex\hbox{$#1$}\Big/\lower1ex\hbox{$#2$}}%
            }
            {% \textstyle
                #1\,/\,#2
            }
            {% \scriptstyle
                #1\,/\,#2
            }
            {% \scriptscriptstyle
                #1\,/\,#2
            }
    }
\newcommand\quoti[2]{
                \text{\raise1ex\hbox{$#1$}/\lower1ex\hbox{$\scriptstyle#2$}}
  }
\newcommand\quot[2]{
                \text{\raise1ex\hbox{$#1\!\!$}/\lower1ex\hbox{$\!\scriptstyle#2$}}
  }
\newcommand\quo[2]{
                \text{\raise.8ex\hbox{$\scriptstyle#1\!$}/\lower.8ex\hbox{$\!\scriptstyle#2$}}
  }
\newcommand\qq[2]{
                \text{\raise.8ex\hbox{$#1\!$}/\lower.8ex\hbox{$#2$}}
}
\begin{document}

\title{Explicit constructions of quilts with seam condition coming from symplectic reduction}
\author{Nathaniel Bottman}
\address{Department of Mathematics, University of Southern California, 3620 S Vermont Ave, Kaprielian Hall Rm.\ 400C, Los Angeles, CA 90089}
\email{\href{mailto:bottman@usc.edu}{bottman@usc.edu}}

\maketitle

\begin{abstract}
Associated to a symplectic quotient $M\reduc G$ is a Lagrangian correspondence $\Lambda_G$ from $M\reduc G$ to $M$.
In this note, we construct in two examples quilts with seam condition on such a correspondence, in the case of $S^1$ acting on $\bCP^2$ with symplectic quotient $\bCP^2\reduc S^1 = \bCP^1$.
First, we exhibit the moduli space of quilted strips that would, if not for figure eight bubbling, identify the Floer chain groups $CF(\gamma,S_\Cl^1)$ and $CF(\bRP^2,T_\Cl^2)$, where $\gamma$ is the connected double-cover of $\bRP^1$.
Second, we answer a question due to Akveld--Cannas~da~Silva--Wehrheim by explicitly producing a figure eight bubble which obstructs an isomorphism between two Floer chain groups.
The figure eight bubbles we construct in this paper are the first concrete examples of this phenomenon.
\end{abstract}

\section{Introduction}
\label{sec:intro}

Suppose that $L_{01}$ is a Lagrangian correspondence from $M_0$ to $M_1$, and $L_0 \subset M_0$ is a Lagrangian.
Then $\pi_1\colon M_0 \times M_0 \times M_1 \to M_1$ restricts to a Lagrangian immersion of
\begin{align}
L_0 \times_{M_1} L_{01} \coloneqq (L_0\times L_{01}) \cap (\Delta_{M_0}\times M_1)
\end{align}
into $M_1$, as long as the intersection appearing in the definition of $L_0\times_{M_1} L_{01}$ is transverse.
(In this situation, $L_0$ and $L_{01}$ are said to have \emph{immersed composition}, and the image $\pi_1(L_0\times_{M_1}L_{01})$ is denoted $L_0 \circ L_{01}$.)
If (1) $L_1 \subset M_1$ is another Lagrangian, (2) both $L_0 \circ L_{01} \subset M_0$ and $L_1 \circ L_{01}^T \subset M_1$ are embedded compositions (where $L_{01}^T$ is the result of regarding $L_{01}$ as a correspondence from $M_1$ to $M_0$), and (3) we make assumptions on the geometry to exclude all bubbling and ensure that the relevant moduli spaces are cut out transversely, then the following \emph{Wehrheim--Woodward isomorphism} of Floer cohomology groups holds (taking coefficients in $\bZ/2$, as we will do throughout this paper):
\begin{align}
\label{eq:WW_iso}
HF(L_0\circ L_{01}, L_1) \simeq HF(L_0, L_1\circ L_{01}^T).
\end{align}
When any of these assumptions are weakened, this isomorphism may not hold.
The obstruction is \emph{figure eight bubbling}, and in this paper we produce the first concrete examples of this bubbling phenomenon, in the context of symplectic reduction.

Suppose that $G$ is a compact Lie group acting in a Hamiltonian fashion on a symplectic manifold $M$, with moment map $\mu\colon M \to \fg^*$; suppose furthermore that $a \in \fg^*$ is a central element, that $\mu^{-1}(a)$ is a regular level set, and that $G$ acts freely and properly on $\mu^{-1}(a)$.
Then $M\reduc G \coloneqq \mu^{-1}(a) / G$ has the natural structure of a symplectic manifold, with $\omega_{M\reduc G}$ defined uniquely by $\pi^*\omega_{M\reduc G} = \iota^*\omega_M$.
There is a Lagrangian correspondence $\Lambda_G$ from $M\reduc G$ to $M$, defined by
\begin{align}
\Lambda_G
\coloneqq
\left\{([p],p) \:|\: p \in \mu^{-1}(a)\right\} \subset (M\reduc G)^- \times M.
\end{align}
The action we will consider in this paper is $S^1$ acting on $\bCP^2$ by
\begin{align}
e^{i\lambda}\cdot[X:Y:Z] \coloneqq [X:Y:e^{i\lambda}Z],
\end{align}
with moment map
\begin{align}
\mu\colon \bCP^2 \to \bR,
\qquad
\mu[X:Y:Z] \coloneqq -\frac 1 2\frac {|Z|^2} {|X|^2 + |Y|^2 + |Z|^2}.
\end{align}
Here and throughout the rest of this paper, we equip $\bCP^2$ with the Fubini--Study form normalized to have monotonicity constant 1, and define $\bCP^2\reduc S^1$ to be the reduction at the level set $\mu^{-1}(-\tfrac16)$.
$\bCP^2\reduc S^1$ is then $\bCP^1$ with the normalized Fubini--Study form with monotonicity constant 1.
The associated correspondence $\Lambda_{S^1}$ is diffeomorphic to $S^3$, and is therefore a monotone Lagrangian correspondence from $\bCP^1$ to $\bCP^2$.

In the upcoming two sections we explicitly construct holomorphic quilts with seam condition on $\Lambda_{S^1}$, in the following two examples:

\begin{itemize}
\item In \S\ref{sec:CP1_CP2}, we study the moduli space of rigid quilted strips mapping to $\bCP^1$ and $\bCP^2$, with boundary on the Clifford circle $S^1_\Cl$ and $\bRP^2$ and seam condition on $\Lambda_{S^1}$.
If there were no bubbling, this moduli space would produce a cobordism giving rise to an isomorphism $CF(\gamma,S^1_\Cl) \simeq CF(\bRP^2,T^2_\Cl)$ as in \eqref{eq:WW_iso}.
Such an isomorphism does not hold, as the left-hand side is a chain complex and the right-hand side is only a matrix factorization of $\id$.
We explicitly exhibit this moduli space by expressing the constituent quilts in terms of Blaschke products and Poisson integrals.
Bubbling must occur, and indeed we see four obstructing figure eight bubbles.

\medskip

\item In \S\ref{sec:question}, we answer a question posed by Akveld--Cannas da Silva--Wehrheim in a 2015 private communication \cite{acw}.
This question concerns the ``Ana Cannas'' Lagrangian $L_\AC \subset \bCP^2$ defined in \eqref{eq:L_AC}, which is Hamiltonian-isotopic to $\bRP^2$ but has the property that the composition $L_\AC \circ \Lambda_{S^1}^T = S^1_\Cl$ is an embedded Lagrangian.
Akveld--Cannas da Silva--Wehrheim observed that the lack of an isomorphism between $CF(S^1_\Cl,\bRP^1)$ and $CF(L_\AC,\bRP^1\circ\Lambda_{S^1})$ implies the existence of a figure eight bubble of the form described in Question~\ref{q:acw}, and asked whether such a bubble can be explicitly produced.
We do so in \S\ref{sec:question}.
We leave open the question of whether the figure eight bubble we produce is the unique satisfactory bubble, and also the question of how to see this quilt bubbling off from the relevant moduli space of quilted strips.
\end{itemize}

\subsection{Acknowledgements}

Mark Goresky suggested in a related situation that it could be more tractable to work with quilts whose domains are quilted half-planes rather than quilted disks; this idea turned out to be of crucial importance, and gave the author the confidence to carry out the work in this paper.
Alexandre Eremenko answered a question the author posed on MathOverflow, which was key to the construction of the quilted strips in \S\ref{sec:CP1_CP2}.
Denis Auroux made a helpful observation about the nontriviality of the double-cover $\bRP^2 \circ \Lambda_{S^1}^T \to \bRP^1$.
Katrin Wehrheim introduced the author in 2015 to the question of Akveld--Cannas~da~Silva--Wehrheim and suggested that he think about the relationship between $CF(\gamma,S^1_\Cl)$ and $CF(\bRP^2,T^2_\Cl)$.
The author thanks Paul Seidel and Chris Woodward for their encouragement in the beginning of this investigation.

The work in this paper was carried out while the author was a member at the Institute for Advanced Study and a postdoctoral researcher at Princeton University, and a visitor at the Mathematical Sciences Research Institute.
The author was supported by an NSF Mathematical Sciences Postdoctoral Research Fellowship.

\section{The moduli spaces of quilted strips relating $CF(\gamma,S^1_\Cl)$ and $CF(\bRP^2,T^2_\Cl)$}
\label{sec:CP1_CP2}

Set $\gamma \coloneqq \bRP^2 \circ \Lambda_{S^1}^T$.
Then we have $\gamma = \{[A:B:C] \in \bRP^2 \:\:|\:\: 2C^2 = A^2 + B^2\}$, so $\gamma$ is the connected double-cover of $\bRP^1 \subset \bCP^1$.
In the following lemma, we compute the generators and differentials in $CF(\gamma,S^1_\Cl)$ and $CF(\bRP^2,T^2_\Cl)$.

\begin{lemma}
\label{lem:floer_diff}
The following figure indicates the generators and differentials in $CF(\gamma,S^1_\Cl)$ and $CF(\bRP^2,T^2_\Cl)$, where each dot resp.\ arrow represents a generator resp.\ a single rigid strip.
\begin{figure}[H]
\centering
\def\svgwidth{0.55\columnwidth}
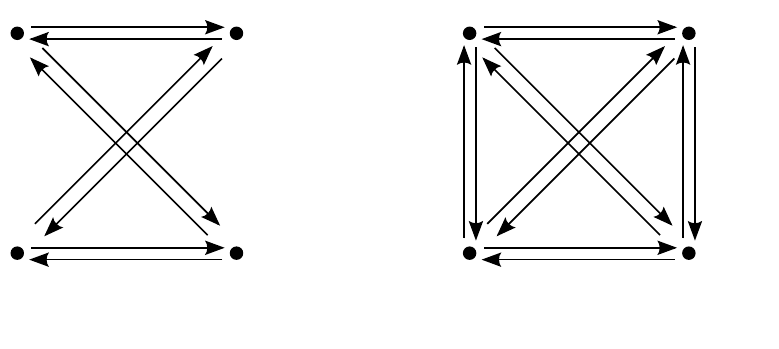
\end{figure}
\end{lemma}

\begin{proof}
We begin with $CF(\bRP^2,T^2_\Cl)$.
This chain group is generated by the points $p_{\pm\pm} \coloneqq [1:\pm1:\pm1]$, where the two signs need not be the same.
It follows from \cite[Prop.~3.1]{alston} that there are 12 rigid strips $u^i_{\pm\pm}\colon \{z \in \bC \:|\: 0 \leq \im z \leq \pi/2\} \to \bCP^2$, $i \in \{0,1,2\}$ that contribute to the differential:
\begin{align}
u^0_{\pm\pm}(z)
\coloneqq
\Bigl[
\frac{e^z-1}{e^z+1}
: \pm 1
: \pm 1\Bigr],
\quad
u^1_{\pm\pm}(z)
\coloneqq
\Bigl[
1
: \pm \frac{e^z-1}{e^z+1}
: \pm 1\Bigr]
\quad
u^2_{\pm\pm}(z)
\coloneqq
\Bigl[
1
: \pm 1
: \pm \frac{e^z-1}{e^z+1}\Bigr].
\end{align}
Then $u_{\pm\pm}^0$ goes from $p_{\mp\mp}$ to $p_{\pm\pm}$, $u_{\pm\pm}^1$ goes from $p_{\mp\pm}$ to $p_{\pm\pm}$, and $u_{\pm\pm}^2$ goes from $p_{\pm\mp}$ to $p_{\pm\pm}$, which shows that the left half of the figure in the lemma statement indeed depicts the generators and differential of $CF(\gamma,S^1_\Cl)$.

Next, we consider $CF(\gamma,S^1_\Cl)$.
Using the presentation of $\gamma$ we gave at the beginning of the current section, we can write the generators of $CF(\gamma,S^1_\Cl)$ in the same way as those of $CF(\bRP^2,T^2_\Cl)$, as $p_{\pm\pm} = [1:\pm1:\pm1]$, where we can think of the second sign as differentiating between the two sheets of $\gamma$.
Again using \cite[Prop.~3.1]{alston}, there are 8 rigid strips $u_{\pm\pm}^i\colon \{z \in \bC \:|\: 0 \leq \im z \leq \pi/2\} \to \bCP^1$, $i \in \{0,1\}$ that contribute to the differential:
\begin{align}
v^0_{\pm\pm}(z)
\coloneqq
\Bigl[
\frac{e^z-1}{e^z+1}
: \pm 1\Bigr],
\quad
v^1_{\pm\pm}(z)
\coloneqq
\Bigl[
1
: \pm \frac{e^z-1}{e^z+1}\Bigr],
\end{align}
where the second sign in the subscript of $v_{\pm\pm}^i$ indicates the sheet of $\gamma$ that contains the input generator.
Then $v_{\pm\pm}^0$ goes from $p_{\mp\mp}$ to $p_{\mp\pm}$, and $v_{\pm\pm}^1$ goes from $p_{\mp\pm}$ to $p_{\pm\pm}$.
This justifies the right half of the figure in the lemma statement.
\end{proof}

Denoting the instances of $\mu^1$ on the left resp.\ right by $\mu^1_{\bCP^1}$ resp.\ $\mu^1_{\bCP^2}$, one can immediately see that $(\mu^1_{\bCP^1})^2 = 0$ but $(\mu^1_{\bCP^2})^2 = \id$.
This means that in this case, the isomorphism \eqref{eq:WW_iso} must be obstructed by figure eight bubbling.
The moduli space $\ol\cM$ of quilted strips that relates $CF(\gamma,S^1_\Cl)$ and $CF(\bRP^2,T^2_\Cl)$ is defined like so:

\medskip

\noindent
{\bf Moduli problem:} $\cM$ is the moduli space of holomorphic quilted strips $\ul u = (u_1, u_2)$ of Maslov index 1 satisfying
\begin{gather}
\label{eq:master_eq}
u_2\colon \{z \in \bC \:|\: 0 \leq \im z \leq h\} \to \bCP^2,
\qquad
u_1\colon \{z \in \bC \:|\: h \leq \im z \leq \pi/2\} \to \bCP^1,
\\
u_2(x) \in \bRP^2,
\quad
u_1(x+i\pi/2) \in S^1_\Cl,
\quad
(u_2(x+ih),u_1(x+ih)) \in \Lambda_{S^1}
\qquad
\forall\: x\in \bR
\nonumber
\end{gather}
for some $h \in [0,\pi/2]$.
$\ol\cM$ is the Gromov compactification of $\cM$, as described in \S4, \cite{bw}.

\medskip

Before we present formulas for the quilts making up $\ol\cM$, we explain the idea behind our construction, which applies more generally to constructions of pseudoholomorphic quilts with seam condition coming from symplectic reduction.
$\cM$ is composed of strips pictured in (a) below; fix such a strip $\ul u = (u_1,u_2)$.
Denote by $\pi\colon \bCP^2 \dashrightarrow \bCP^1$ the map sending $[X:Y:Z]$ to $[X:Y]$, which has the properties that (1) $\pi$ is holomorphic where is it defined, (2) $(\id_{\bCP^1} \times \pi)(\Lambda_{S^1}) = \Delta_{\bCP^1}$, and (3) $\pi(\bRP^2) = \bRP^1$, and in particular is Lagrangian.
It follows that if we form the quilt $\wt u = (u_1,\pi\circ u_2)$ as in (b) below, we may ``erase'' the seam and obtain an unquilted strip as in (c), mapping to $\bCP^1$ and with boundary conditions in $S^1_\Cl$ and $\bRP^1$.
(Since $\pi$ is undefined at $[0:0:1]$, $\pi\circ u_2$ may be initially undefined at a discrete subset of the domain, but these singularities are removable.)

\begin{figure}[H]
\centering
\def\svgwidth{1.0\columnwidth}
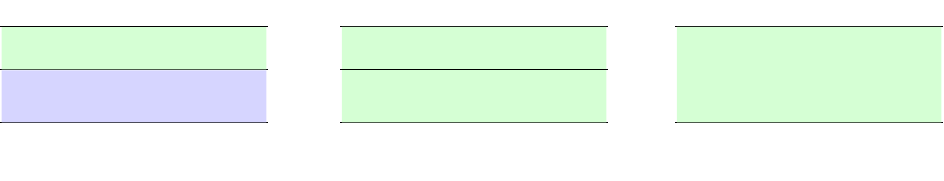
\caption{\label{fig:project}}
\end{figure}

\noindent Holomorphic strips as in (c) are completely characterized in terms of Blaschke products by work of Alston \cite{alston}.
Recovering the original quilt $\ul u$ is then a matter of producing a function $f$ so that $u_2 = [u_2^0:u_2^1:f]$, where we have denoted $\pi\circ u_2 \eqqcolon [u_2^0:u_2^1]$, for homogeneous coordinate functions $u_2^0, u_2^1$ chosen without loss of generality to be real-valued on $\bR$.
Such an $f$ is to satisfy the following conditions: (1) $f$ is meromorphic, (2) $f(x) \in \bR$ for $x$ in the bottom boundary, and (3) $2|f(x+ih)|^2 = |u_2^0(x+ih)|^2 + |u_2^1(x+ih)|^2$ for $x+ih$ in the top boundary.

We now turn to our explicit characterization of $\ol\cM$.
We need one more preparatory lemma, in which we characterize in the setting of $\ol\cM$ the function we denoted above by $f$.

\begin{lemma}[paraphrased from \cite{eremenko}]
\label{lem:eremenko}
Fix $h \in (0,\pi/2)$.
There are two zero-free holomorphic functions $f\colon \{z \in \bC \:|\: 0 \leq \im z \leq h\} \to \bC$ satisfying the following conditions:
\begin{align}
\label{eq:f_pm_conds}
f(x) \in \bR
\:\forall\:
x \in \bR,
\qquad
|f(x+ih)|^2 = \exp(2x) + 1
\:\forall\:
x \in \bR,
\qquad
\lim_{\re z \to \pm\infty}
\left|\frac {f(z)^2}{\exp(2z)+1}\right|
=
1.
\end{align}
They are given by the formula
\begin{align}
\label{eq:f_pm_formula}
f_\pm(z)
=
\pm\exp\left(
\frac1{2\pi i}
\int_{-\infty}^\infty
\frac{1+it\exp(\tfrac\pi{2h}z)}{t-i\exp(\tfrac\pi{2h}z)}
\cdot
\frac{\log\bigl(1+|t|^{4h/\pi}\bigr)}{1+t^2} \,\d t
\right).
\end{align}
Moreover, if we remove the zero-free condition, then the satisfactory functions are exactly those of the form $\prod_{i=1}^k \bigl(\exp(\tfrac{\pi}{2h}z) - \alpha_i\bigr)\big/\bigl(\exp(\tfrac\pi{2h}z) - \ol{\alpha_i}\bigr) \cdot f_\pm(z)$ for some $(\alpha_i) \subset \{-h < \im z < h\}$ invariant as a set under the automorphism $z \mapsto \ol z$, i.e.\ multiples of $f_\pm$ by a suitable Blaschke product.
\end{lemma}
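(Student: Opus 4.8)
The plan is to reduce everything to a classical boundary-value problem on a strip. First I would normalize the domain by applying the biholomorphism $z \mapsto \exp(\tfrac\pi{2h}z)$, which carries the strip $\{0 \le \im z \le h\}$ to the closed upper half-plane $\ol{\bH}$, sending $\bR$ to the positive real axis and the line $\im z = h$ to the negative real axis. Under this change of variables a zero-free holomorphic $f$ on the strip corresponds to a zero-free holomorphic $F$ on $\ol{\bH}$, and one computes that the constraints \eqref{eq:f_pm_conds} become: $F$ real on $\bR_{>0}$, $|F(s)|^2 = |s|^{4h/\pi} + 1$ for $s \in \bR_{<0}$, and a normalization controlling the behavior as $s \to 0$ and $s \to \infty$ so that $F^2/(s^{4h/\pi}+1) \to 1$ at both ends. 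Writing $F = \exp(G)$ (legitimate since $F$ is zero-free and $\ol{\bH}$ is simply connected), the multiplicative constraints become additive constraints on $G = \log F$: namely $\Im G \in \pi\bZ$ on $\bR_{>0}$ and $2\Re G = \log(|s|^{4h/\pi}+1)$ on $\bR_{<0}$, i.e.\ a mixed Dirichlet-type condition — prescribing $\Im G$ on one ray and $\Re G$ on the other.

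Next I would solve this mixed boundary-value problem explicitly. The standard device is the Schwarz-type integral for the half-plane with a split boundary: the function $\tfrac1{2\pi i}\int_{-\infty}^\infty \tfrac{1+its}{t-is}\cdot\varphi(t)\,\tfrac{\d t}{1+t^2}$ is holomorphic in $s \in \bH$, and its boundary values recover $\varphi$ in the appropriate sense on the negative axis while contributing a purely imaginary (indeed locally constant) term on the positive axis — this is exactly the kernel appearing in \eqref{eq:f_pm_formula} after undoing the substitution $s = \exp(\tfrac\pi{2h}z)$. Taking $\varphi(t) = \tfrac12\log(1+|t|^{4h/\pi})$ and exponentiating yields a holomorphic solution; one must then check the normalization/growth condition at $\pm\infty$, which follows by estimating the integral as $\re z \to \pm\infty$ (the kernel concentrates and the integral converges to the prescribed logarithmic term), and verify convergence of the integral itself, which holds because $\log(1+|t|^{4h/\pi})/(1+t^2)$ is integrable at both $0$ and $\infty$. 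The sign ambiguity $f_\pm = \pm(\cdots)$ reflects the two branches of $\exp(\tfrac12\log(\cdot))$, equivalently the choice of $\Im G \in \{0,\pi\} \pmod{2\pi}$ on $\bR_{>0}$; these are the only two zero-free solutions because the difference of logarithms of any two solutions is a bounded harmonic function on $\ol{\bH}$ vanishing on the boundary (real part zero on one ray, imaginary part in $\pi\bZ$ hence locally constant on the other, and the growth condition pins down the constant), so it is identically zero, forcing the ratio to be $\pm1$.

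For the last sentence, I would argue that dropping the zero-free hypothesis only introduces zeros, and a holomorphic $f$ on the strip satisfying \eqref{eq:f_pm_conds} has finitely many zeros: the modulus conditions on the two boundary lines force the zeros to lie in the interior, the growth condition at $\re z \to \pm\infty$ bounds their number, and the requirement $f(x) \in \bR$ for $x \in \bR$ forces the zero set to be invariant under $z \mapsto \ol z$. Dividing $f$ by the finite Blaschke product $\prod_{i=1}^k\bigl(\exp(\tfrac\pi{2h}z)-\alpha_i\bigr)/\bigl(\exp(\tfrac\pi{2h}z)-\ol{\alpha_i}\bigr)$ — which has modulus $1$ on both boundary lines (this uses that the $\alpha_i$ lie in the corresponding half-plane image, so this factor is a genuine automorphism-type factor), is real on $\bR$, and has no effect on the limits in \eqref{eq:f_pm_conds} — produces a zero-free solution, which by the first part equals $f_+$ or $f_-$; conversely any such product is readily checked to satisfy all the conditions. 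The main obstacle I anticipate is the careful verification of the normalization at the two ends $\re z \to \pm\infty$ and the corresponding uniqueness argument: one has to show the competing harmonic function is genuinely bounded (ruling out growth absorbed into the $\log$ terms) and that the boundary conditions plus this boundedness leave no freedom, which is where the precise form of the growth condition in \eqref{eq:f_pm_conds} is essential; everything else is a routine, if delicate, computation with the Schwarz kernel. Since this lemma is attributed to Eremenko, I would present this as a streamlined exposition of his argument rather than claim originality.
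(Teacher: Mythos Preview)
Your biholomorphism is wrong: the map $z \mapsto \exp(\tfrac\pi{2h}z)$ sends $\{0 \le \im z \le h\}$ to the closed \emph{first quadrant}, not to $\ol\bH$, since $\exp\bigl(\tfrac\pi{2h}(x+ih)\bigr) = i\exp(\tfrac{\pi x}{2h})$ lands on $i\bR_{>0}$, not on $\bR_{<0}$. A map from the single strip onto $\ol\bH$ sending the two boundary lines to the two real rays would require exponent $\tfrac\pi h$, not $\tfrac\pi{2h}$; the factor of $2$ in the lemma's formula is a clue that something else is happening. In the paper the first move is Schwarz reflection across $\bR$ to the symmetric strip $\{-h \le \im z \le h\}$, and only then does one pass to $\bH$ via $z \mapsto i\exp(\tfrac\pi{2h}z)$. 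After reflection the reality condition on $\bR$ becomes the symmetry $g(-\ol w) = \ol{g(w)}$, while the modulus condition is now imposed on \emph{all} of $\bR$ (both rays come from $\bR\pm ih$). This converts the problem into the standard one of producing a holomorphic function on $\bH$ with prescribed modulus on the whole real line, for which the de Branges/outer-function kernel in \eqref{eq:f_pm_formula} is the textbook answer; the reality condition on $i\bR_{>0}$ is then checked a posteriori by a symmetry computation.

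Your route---mapping the single strip directly and solving a mixed problem with $\Im G$ prescribed on one ray and $\Re G$ on the other---is not hopeless, but your claim that the same Schwarz kernel ``recovers $\varphi$ on the negative axis while contributing a purely imaginary locally constant term on the positive axis'' is unsupported: that kernel reconstructs the real part from its full boundary data, and does not automatically separate behavior on the two rays. To make the mixed problem tractable you would in effect have to perform a further conformal map (e.g.\ a square root opening the quadrant to a half-plane), which amounts to the reflection step you skipped. Your uniqueness and Blaschke arguments are in the right spirit and close to the paper's (which argues that the ratio $f/f_+$ pulls back to a bounded holomorphic function of modulus $1$ on $\bR$, hence a finite Blaschke product, and then isolates those real on $i\bR_{>0}$), but they rest on the same mis-identified domain, so the details would need reworking once the geometry is fixed.
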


\begin{proof}
First, we address the claim that the functions $f_\pm$ defined in \eqref{eq:f_pm_formula} are zero-free holomorphic functions satisfying \eqref{eq:f_pm_conds}.
By Schwarz reflection, we can equivalently search for holomorphic zero-free functions $f\colon \{-h \leq \im z \leq h\} \to \bC$ satisfying the conditions
\begin{align}
f(x) \in \bR
\:\forall\:
x \in \bR,
\qquad
|f(x\pm ih)|^2 = \exp(2x) + 1
\:\forall\:
x \in \bR,
\qquad
\lim_{\re z \to \pm\infty}
\left|\frac {f(z)^2}{\exp(2z) + 1}\right|
=
1.
\end{align}
The map $z \mapsto \log\bigl((-iz)^{2h/\pi}\bigr)$ is a biholomorphism from $\bH\setminus\{0\}$ to $\{-h\leq\im z\leq h\}$ that sends $\bR_{>0}$ resp.\ $i\bR_{>0}$ resp.\ $\bR_{<0}$ to $\bR - ih$ resp.\ $\bR$ resp.\ $\bR + ih$, so we can equivalently search for holomorphic zero-free functions $g\colon \bH\setminus\{0\} \to \bC$ satisfying
\begin{align}
\label{eq:g_conds}
g(x) \in \bR
\:\forall\:
x \in i\bR_{>0},
\qquad
|g(x)| = \sqrt{|x|^{4h/\pi} + 1}
\:\forall\:
x \in \bR,
\qquad
\lim_{z \to 0, \infty}
\left|\frac {g(z)^2}{z^{4h/\pi} + 1}\right|
=
1.
\end{align}
It follows from \cite[Thm.~1, \S1.2]{de_branges} that the following functions $g_\pm \colon \inte\:\bH \to \bC$ are holomorphic, zero-free, extend continuously to $\bH$, and satisfy the middle equality in \eqref{eq:g_conds}:
\begin{align}
\label{eq:g_pm_formula}
g_\pm(z)
\coloneqq
\pm\exp\left(\frac1{2\pi i}\int_{-\infty}^\infty
\frac{1+tz}{t-z}
\cdot
\frac{\log\left(1+|t|^{4h/\pi}\right)}{1+t^2} \,\dt\right)
\end{align}
In the following bullets, we check that $g_\pm$ extend smoothly to $\bH\setminus\{0\}$ and that these functions satisfy the first and third conditions in \eqref{eq:g_conds}.
\begin{itemize}
\item
To check the first condition in \eqref{eq:g_conds}, we check that the argument of the exponential in \eqref{eq:g_pm_formula} is real for $z = iy$, $y > 0$:
\begin{align}
\frac1{2\pi i}
\int_{-\infty}^0
\frac{1+ity}{t-iy}
\cdot
\frac{\log\left(1+|t|^{4h/\pi}\right)}{1+t^2}
\,\dt
&=
-\frac1{2\pi i}
\int_0^\infty
\frac{1-it\wt y}{t+i\wt y}
\cdot
\frac{\log\left(1+|t|^{4h/\pi}\right)}{1+t^2}
\,\dt
\\
&=
\ol{\frac1{2\pi i}
\int_0^\infty
\frac{1+ity}{t-iy}
\cdot
\frac{\log\left(1+|t|^{4h/\pi}\right)}{1+t^2}
\,\dt}.
\nonumber
\end{align}

\medskip

\item
Next, we must check the third condition in \eqref{eq:g_conds}.
The equality $\lim_{z \to 0} |g_\pm(z)| = 1$ follows from the continuity of $g_\pm$ on $\bH$ and the second condition in \eqref{eq:g_conds}.
The equality $\lim_{z \to \infty} \bigl|g_\pm(z)^2/z^{4h/\pi}\bigr| = 1$ follows from an argument analogous to the one made in \cite[Thm.~2, \S1.2]{de_branges}.

\medskip

\item
Finally, we must show that $g_\pm$ extend smoothly to $\bH\setminus\{0\}$.
It suffices by the Sobolev embedding theorem to show that for every $u \in \bR\setminus\{0\}$, $g_\pm$ is in $W^{2,4}$ on the intersection of a small ball centered at $u$ with $\bH\setminus\{0\}$.
Denoting
\begin{align}
I(z)
\coloneqq
\int_{-\infty}^\infty
\frac{1+tz}{t-z}
\cdot
\frac{\log\bigl(1+|t|^{4h/\pi}\bigr)}{1+t^2}
\,\dt,
\end{align}
this follows from the identities
\begin{align}
I'(z)
=
\int_{-\infty}^\infty
\frac{\log\bigl(1+|t|^{4h/\pi}\bigr)}{(t-z)^2}
\,\dt,
\qquad
I''(z)
=
\int_{-\infty}^\infty
\frac{2\log\bigl(1+|t|^{4h/\pi}\bigr)}{(t-z)^3}
\,\dt
\end{align}
and a simple argument using integration by parts and the differentiability of $\log\bigl(1+|t|^{4h/\pi}\bigr)$ away from $t=0$.
\end{itemize}
Precomposing $g_\pm$ with the biholomorphism from $\{-h\leq\im z\leq h\}$ to $\bH\setminus\{0\}$ that sends $z$ to $i\exp(\tfrac\pi{2h}z)$, we see that the functions $f_\pm$ defined in \eqref{eq:f_pm_formula} do indeed satisfy the conditions in \eqref{eq:f_pm_conds}.

Second, we must show that the holomorphic functions $f\colon \{0 \leq \im z \leq h\} \to \bC$ satisfying \eqref{eq:f_pm_conds} are exactly those functions of the form $\prod_{i=1}^k \bigl(\exp(\pi z/2h) - \alpha_i\bigr)\big/\bigl(\exp(\pi z/2h) - \ol{\alpha_i}\bigr) \cdot f_\pm(z)$ for some $(\alpha_i) \subset \bR_{>0}$.
If $f$ is such a function, then $a \coloneqq f/f_+$ is a holomorphic function on $\{0\leq \im z \leq h\}$ which satisfies the conditions
\begin{align}
a(x) \in \bR
\:\forall\: x \in \bR,
\qquad
|a(x + ih)| = 1
\:\forall\: x \in \bR,
\qquad
\lim_{\re z \to \pm\infty}
|a(z)|
= 1.
\end{align}
Using Schwarz reflection across $\bR$ and then considering the biholomorphism $z \mapsto \log\bigl((-iz)^{2h/\pi}\bigr)$ from $\bH\setminus\{0\}$ to $\{-h\leq\im z\leq h\}$, we see that we may equivalently characterize those holomorphic functions $b \colon \bH\setminus\{0\} \to \bC$ satisfying
\begin{align}
\label{eq:b_conds}
b(x) \in \bR
\:\forall\: x \in i\bR_{>0},
\qquad
|b(x)| = 1
\:\forall\:
x \in \bR\setminus\{0\},
\qquad
\lim_{z \to 0,\infty}
|b(z)|
= 1.
\end{align}
All such functions are of the form
\begin{align}
\label{eq:b_blaschke}
b(z)
=
\prod_{i=1}^k \frac {z-\beta_i}{z-\ol{\beta_i}},
\quad
(\beta_i) \subset \inte \: \bH;
\end{align}
conversely, any function of this form satisfies all the desired conditions in \eqref{eq:b_conds} except possibly the first.
The manipulation
\begin{align}
\prod_{i=1}^k \frac {iy-\beta_i}{iy-\ol{\beta_i}}
=
\prod_{i=1}^k \frac {y^2+\beta_i^2}{y^2-2i\im \beta_i\: y + |\beta_i|^2}
\end{align}
shows that the satisfactory functions are those of the form \eqref{eq:b_blaschke} such that $\{\beta_i\}$ is mapped onto itself by the automorphism of $\inte\:\bH$ that sends $z$ to $-\ol z$.
\end{proof}

\begin{proposition}
$\ol\cM$ consists of 12 components, each of which fibers trivially over the $h$-interval $[0,\pi/2]$.
$\ol\cM_{h=\pi/2}$ consists of 12 Floer strips, which are exactly the contributions to $\mu^1_{\bCP^2}$ shown on the right of the figure in Lem.~\ref{lem:floer_diff}.
$\ol\cM_{h=0}$ consists of 8 Floer strips, which are the contributions to $\mu^1_{\bCP^1}$ shown on the left of the figure in Lem.~\ref{lem:floer_diff}, as well as 4 constant strips with figure eight bubbles attached.
\end{proposition}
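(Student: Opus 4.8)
The plan is to upgrade the informal ``erase the seam'' picture of Figure~\ref{fig:project} to an exact parametrization of $\cM$ and then read off the two boundary fibers. Fix $h\in(0,\pi/2)$ and let $\underline u=(u_1,u_2)\in\cM$ have parameter $h$. I form $w\colon\{0\le\im z\le\pi/2\}\to\bCP^1$ by gluing $u_1$ on $\{h\le\im z\le\pi/2\}$ to $\pi\circ u_2$ (with its removable singularities filled in) on $\{0\le\im z\le h\}$; properties (1)--(3) of $\pi$ recorded before Figure~\ref{fig:project} show that $w$ is a well-defined holomorphic strip with $w(\bR)\subset\bRP^1$ and $w(\bR+i\pi/2)\subset S^1_\Cl$, and that $\underline u$ is recovered from the pair $(w,f)$ by $u_1=w|_{\{h\le\im z\le\pi/2\}}$, $u_2=[w^0:w^1:f]$, where $w=[w^0:w^1]$ with $w^0,w^1$ chosen real on $\bR$, and $f$ is meromorphic on $\{0\le\im z\le h\}$ with $f(\bR)\subset\bR$, $2|f(x+ih)|^2=|w^0(x+ih)|^2+|w^1(x+ih)|^2$, and an asymptotic normalization as $\re z\to\pm\infty$ encoding convergence of $\underline u$ to its limiting generators.

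Next I would enumerate the pairs $(w,f)$. By \cite[Prop.~3.1]{alston}, a \emph{nonconstant} $w$ compatible with the Maslov-$1$ condition on $\underline u$ is, up to translation, one of $[\tfrac{e^z-1}{e^z+1}:\pm1]$, $[1:\pm\tfrac{e^z-1}{e^z+1}]$, and for a suitable choice of $w^0,w^1$ each of these satisfies $|w^0(x+ih)|^2+|w^1(x+ih)|^2=2(e^{2x}+1)$, so the conditions on $f$ become exactly those of Lemma~\ref{lem:eremenko}: the index-$1$ solutions are $f_+$ and $f_-$ from \eqref{eq:f_pm_formula}, since extra Blaschke factors raise the Maslov index. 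This produces $4\times2=8$ quilts. The remaining possibility is $w$ constant, $w\equiv[1:\pm1]$, for which the conditions on $f$ read $f(\bR)\subset\bR$ and $|f(x+ih)|=1$; a direct argument (Schwarz reflection across $\bR$ and across $\{\im z=h\}$, together with the asymptotic normalization and the Maslov-$1$ constraint) shows the solutions are $f(z)=\pm\tfrac{\exp(\pi z/2h)-1}{\exp(\pi z/2h)+1}$, giving $4$ more. Hence $\cM$ is exactly the union of these $12$ one-parameter families; in each, $u_1$ and $u_2$ depend on $h$ only through $\exp(\pi z/2h)$, so rescaling $\im z$ by a factor identifies each family with $(0,\pi/2)$, proving the trivial fibration over $[0,\pi/2]$.

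For the $h\to\pi/2$ fiber, the domain of $u_1$ collapses, $u_1$ becomes trivial, and $u_2$ converges to a holomorphic strip into $\bCP^2$ with boundary on $\bRP^2$ and, by the limiting seam equation, on $\mu^{-1}(-\tfrac16)\cap\pi^{-1}(S^1_\Cl)=T^2_\Cl$; evaluating \eqref{eq:f_pm_formula} at $h=\pi/2$ (a Poisson-integral identity giving $f_\pm\to\pm(e^z+1)$) and $\pm\tfrac{\exp(\pi z/2h)-1}{\exp(\pi z/2h)+1}\to\pm\tfrac{e^z-1}{e^z+1}$ identifies the $12$ limits with the strips $u^0_{\pm\pm},u^1_{\pm\pm},u^2_{\pm\pm}$ of Lemma~\ref{lem:floer_diff}, i.e.\ the contributions to $\mu^1_{\bCP^2}$. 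For the $h\to0$ fiber the domain of $u_2$ collapses; for the $8$ components with nonconstant $w$ the energy of $u_2$ tends to $0$, so $u_1$ limits to the corresponding strip into $\bCP^1$, with the sign of $f$ recording the sheet of $\gamma=\bRP^2\circ\Lambda_{S^1}^T$ over the endpoint $[w^0:w^1:\pm1]$, yielding the $8$ strips $v^0_{\pm\pm},v^1_{\pm\pm}$, i.e.\ the contributions to $\mu^1_{\bCP^1}$. For the $4$ components with constant $w$ both $u_1$ and $u_2$ limit to constants, but $\int u_2^*\omega_{\bCP^2}$ is independent of $h$ and positive (after rescaling $\im z$, $u_2$ becomes the fixed strip $u^2_{\pm\pm}$), so by Gromov compactness for quilts as in \cite[\S4]{bw} this energy must reappear as a bubble attached at the vanishing seam, which one identifies from the rescaled limit as a figure eight bubble. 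Since the parametrization above exhausts $\cM$ and a convergent sequence in $\cM$ with interior limiting parameter stays among the $12$ arcs, these three descriptions account for all of $\ol\cM$, $\ol\cM_{h=\pi/2}$ and $\ol\cM_{h=0}$.

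The main obstacle is the figure eight bubbling claim in the $h\to0$ analysis: showing that the energy trapped in the $4$ constant-$w$ components does not dissipate and assembles into precisely a figure eight bubble (rather than a disk bubble, a sphere bubble, or a combination), and pinning down which one. This is a rescaling/Gromov-compactness argument for quilted strips in the sense of \cite[\S4]{bw}; the mechanism is that $u_1$'s formula is $h$-independent while $u_2$'s carries the conformal factor $\exp(\pi z/2h)$, so energy escapes at $h=0$ but not at $h=\pi/2$. It is where essentially all the content lies --- the explicit parametrization and the two limit computations are bookkeeping on top of \cite{alston} and Lemmas~\ref{lem:floer_diff} and~\ref{lem:eremenko}.
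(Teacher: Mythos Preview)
Your approach is the paper's: project via $\pi$, split into the constant-projection and Maslov-$1$-projection cases, use Lemma~\ref{lem:eremenko} (resp.\ its degenerate analogue) to enumerate $f$, and read off the $h\to\pi/2$ and $h\to0$ limits from the explicit formulas. Two small corrections worth flagging: in the Maslov-$1$ case $f_\pm$ depends on $h$ also through the exponent $4h/\pi$ in \eqref{eq:f_pm_formula}, so the trivial fibration follows simply from having exactly one quilt per choice of $(h,\text{type})$ rather than from your rescaling remark; and the figure eight step is not where ``essentially all the content lies'' --- the paper just writes the rescaled limit $v_2(z)=[1:\pm1:\pm(e^z-1)/(e^z+1)]$, $v_1\equiv[1:\pm1]$ down directly, so it is no more delicate than the other limit computations.
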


\begin{proof}
The quilted strips come in two types, corresponding to whether the underlying strip with target $\bCP^1$ as in Fig.~\ref{fig:project} is constant or is nonconstant and has Maslov index $1$.
We deal with these two cases in the following two bullets.
\begin{itemize}
\item {\bf Quilted strips with constant projection.}
Fix $h \in (0,\pi/2)$.
In this case, we must have $u_1(z) = [1:\pm1]$.
To compute $u_2$, we must characterize the minimal-energy nonconstant holomorphic functions $f\colon \{0\leq \im z \leq h\} \to \bC$ satisfying the conditions
\begin{align}
f(x) \in \bR
\:\forall\:
x \in \bR,
\qquad
|f(x+ih)| = 1
\:\forall\:
x \in \bR,
\qquad
\lim_{\re z \to \pm\infty}
|f(z)|
=
1.
\end{align}
These are exactly the functions $\pm\bigl(\exp(\tfrac\pi{2h}z)-1\bigr)/\bigl(\exp(\tfrac\pi{2h}z)+1\bigr)$, so we see that there are four valid quilted strips with constant projection:
\begin{align}
u_2(z) = \left[1:\pm1:\pm\frac{\exp(\tfrac \pi{2h}z)-1}{\exp(\tfrac\pi{2h}z)+1}\right],
\qquad
u_1(z) = [1:\pm1],
\end{align}
where the sign in $u_1$ is the same as the first sign in $u_2$.

In the $h \to \pi/2$ limit we get the (unquilted) strips
\begin{align}
\wt u_2(z) = \left[1:\pm1:\pm\frac{\exp(z)-1}{\exp(z)+1}\right].
\end{align}
In the $h \to 0$ limit we get the constant strips $\wt u_1(z) = [1:\pm 1]$, and a figure eight bubbles off at the origin, consisting of the following maps:
\begin{gather}
v_2\colon \{z \:|\: 0 \leq \im z \leq \pi/2\} \to \bCP^2,
\qquad
v_1\colon \{z \:|\: \pi/2 \leq \im z\} \to \bCP^1,
\\
v_2(z) = \left[1:\pm1:\pm\frac{\exp(z)-1}{\exp(z)+1}\right],
\qquad
v_1(z) = [1:\pm1].
\nonumber
\end{gather}
Note that this is a sheet-switching eight in the sense of \cite[Rmk.\ 2.5]{bw}.

\medskip

\item {\bf Quilted strips with Maslov-$1$ projection.}
Fix $h \in (0,\pi/2)$.
In this case, we must have $u_1(z) = [\exp(z)+1:\pm(\exp(z)-1)]$ or $u_1(z) = [\exp(z)-1:\pm(\exp(z)+1)]$.
To compute $u_2$, we must characterize the zero-free holomorphic functions $f \colon \{0 \leq \im z \leq h\} \to \bC$ satisfying the conditions
\begin{align}
f(x) \in \bR
\:\forall\:
x \in \bR,
\qquad
|f(x+ih)|^2
=
\exp(2x) + 1
\:\forall\:
x \in \bR,
\qquad
\lim_{\re z \to \pm\infty}
\left|\frac{f(z)^2}{\exp(2z)+1}\right|
= 1.
\end{align}
As we showed in Lemma~\ref{lem:eremenko}, there are exactly two functions $f_\pm$ that satisfy these conditions, given by the formula \eqref{eq:f_pm_formula}.
There are therefore eight valid quilted strips with Maslov-1 projection:
\begin{align}
&u_2(z)
=
\left[
\exp(z)+1
:
\pm(\exp(z)-1)
:
f_\pm(z)
\right],
\qquad
u_1(z)
=
[\exp(z)+1:\pm(\exp(z)-1)],
\\
&u_2(z)
=
\left[
\exp(z)-1
:
\pm(\exp(z)+1)
:
f_\pm(z)
\right],
\qquad
u_1(z)
=
[\exp(z)-1:\pm(\exp(z)+1)],
\nonumber
\end{align}
where in each line the sign in $u_1$ is the same as the first sign in $u_2$.
(Note that if we had used an $f$ which was not zero-free, then the resulting quilted strip would have had quilted Maslov index greater than 1.)

In the $h \to \pi/2$ limit we get the strips
\begin{align}
&\wt u_2(z) = \left[\exp(z)+1:\pm(\exp(z)-1):\pm(\exp(z)+1)\right],
\\
&\wt u_2(z) = \left[\exp(z)-1:\pm(\exp(z)+1):\pm(\exp(z)+1)\right]
\nonumber
\end{align}
while in the $h \to 0$ limit we get the strips
\begin{gather}
\wt u_1(z) = \left[\exp(z)+1:\pm(\exp(z)-1)\right],
\qquad
\wt u_1(z) = \left[\exp(z)-1:\pm(\exp(z)+1)\right].
\end{gather}
\end{itemize}
\end{proof}

\section{A figure eight bubble predicted by Akveld--Cannas~da~Silva--Wehrheim}
\label{sec:question}

Katrin Wehrheim described the following question to the author in a 2015 private communication \cite{acw}:

\begin{question}[Akveld--Cannas~da~Silva--Wehrheim, private communication]
\label{q:acw}
Can one explicitly produce a quilted disk $\ul u = (u_1,u_2)$ consisting of holomorphic maps
\begin{align}
u_1\colon \{z \:|\: \im z\geq 1\} \to \bCP^1,
\qquad
u_2\colon \{z \:|\: 0 \leq \im z\leq 1\} \to \bCP^2
\end{align}
satisfying seam and boundary conditions in $\Lambda_{S^1}$ resp.\ $L_\AC \subset \bCP^2$,
\begin{gather}
\label{eq:L_AC}
\bigl(u_1(x+i),u_2(x+i)\bigr) \in \Lambda_{S^1} \:\forall\: x \in \bR,
\qquad
u_2(x) \in L_\AC \:\forall\: x \in \bR,
\\
L_\AC \coloneqq \bigl\{[A+iB:i\sqrt 2C:A-iB] \:|\: A,B,C\in\bR,\:|A|^2+|B|^2+|C|^2=1\bigr\},
\nonumber
\end{gather}
and having symplectic area one-fourth that of $\bCP^1$ (hence quilted Maslov index 1)?
\end{question}

\noindent
Abstractly, Akveld--Cannas~da~Silva--Wehrheim showed that such a figure eight bubble must exist: otherwise, $CF(\bRP^1,L_\AC\circ \Lambda_{S^1}^T = S^1_\Cl)$ and $CF(\bRP^1\circ\Lambda_{S^1} = \bRP^1\times S^1_\Cl,L_\AC)$ would be isomorphic chain complexes, which they are not, since the latter is not even a chain complex.
(This follows from the facts that $\bRP^1 \times S^1_\Cl$ is Hamiltonian-isotopic to $T^2_\Cl$, and that by Rmk.~3.6 of \cite{cannas}, $L_\AC$ is Hamiltonian-isotopic to the standard $\bRP^2 \subset \bCP^2$.)

In this section, we answer Question~\ref{q:acw} in the affirmative:

\begin{proposition}
The following two quilts satisfy the conditions of Question~\ref{q:acw}:
\begin{align}
u_1(z) \coloneqq [ \pm(z+6i) : iz ],
\qquad
u_2(z) \coloneqq [ \pm(z+6i) : iz : \pm(z-6i) ],
\end{align}
where the three signs appearing in these formulas are all the same.
%Moreover, these are the only satisfactory quilts.
\end{proposition}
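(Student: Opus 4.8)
The plan is to verify directly that the two explicit maps satisfy all the conditions listed in Question~\ref{q:acw}: holomorphicity, the domain/target specification, the seam condition on $\Lambda_{S^1}$ along $\im z = 1$, the boundary condition on $L_\AC$ along $\im z = 0$, and the symplectic-area constraint. Since $u_1$ and $u_2$ are given by explicit rational (in fact affine) expressions in $z$, holomorphicity is immediate, and the only subtlety is to check that the formulas are well-defined as maps to projective space — i.e.\ that the homogeneous coordinates do not simultaneously vanish on the relevant half-strips. For $u_1 = [\pm(z+6i):iz]$ on $\{\im z \ge 1\}$, the second coordinate $iz$ vanishes only at $z=0$, which lies outside the half-plane, so $u_1$ is genuinely $\bCP^1$-valued there; for $u_2 = [\pm(z+6i):iz:\pm(z-6i)]$ on $\{0\le\im z\le 1\}$, one checks that $z+6i$ and $z-6i$ have no common zero, so at worst one of the three coordinates vanishes at a time, and $u_2$ is $\bCP^2$-valued.

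Next I would check the seam condition. Recalling $\Lambda_{S^1} = \{([p],p)\mid p\in\mu^{-1}(-\tfrac16)\}$ and that $\pi\colon\bCP^2\dashrightarrow\bCP^1$, $[X:Y:Z]\mapsto[X:Y]$, satisfies $(\id\times\pi)(\Lambda_{S^1})=\Delta_{\bCP^1}$, the seam condition $(u_1(x+i),u_2(x+i))\in\Lambda_{S^1}$ breaks into two parts: (i) $u_1(x+i) = \pi(u_2(x+i))$ in $\bCP^1$, and (ii) $u_2(x+i)\in\mu^{-1}(-\tfrac16)$. Part (i) is immediate from the formulas, since erasing the third coordinate of $u_2$ gives exactly $u_1$. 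For part (ii) I would plug $z = x+i$ into $u_2$ and compute $\mu[u_2] = -\tfrac12\,\tfrac{|z-6i|^2}{|z+6i|^2+|z|^2+|z-6i|^2}$, writing $z = x+i$ so $|z+6i|^2 = x^2+49$, $|z|^2 = x^2+1$, $|z-6i|^2 = x^2+25$; then the ratio is $\tfrac{x^2+25}{3x^2+75} = \tfrac13$, giving $\mu = -\tfrac16$ identically in $x$. This is the key computation and where I expect any arithmetic slip to surface — the magic is that the three squared moduli along $\im z=1$ are in arithmetic progression with the middle term realizing the required ratio.

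Then I would check the boundary condition $u_2(x)\in L_\AC$ for $x\in\bR$. Setting $z=x$ real, $u_2(x) = [\pm(x+6i):ix:\pm(x-6i)]$; comparing with $L_\AC = \{[A+iB:i\sqrt2 C:A-iB]\}$ suggests taking (up to the overall sign and an overall scaling) $A = \pm x$, $B = \pm 6$, $C = x/\sqrt2$, which are manifestly real, so $u_2(x)$ lies on the complex line through $L_\AC$; one then notes $L_\AC$ is the full real locus cut out by those coordinate relations inside $\bCP^2$, so membership is automatic once the ratios match (no normalization of $|A|^2+|B|^2+|C|^2$ is needed projectively). It remains to handle the area/Maslov condition: I would compute the symplectic area of $u_2$ (equivalently, that of the associated quilt) by a direct integration of $u_2^*\omega_{\bCP^2}$ over the strip $\{0\le\im z\le1\}$, or more efficiently by identifying $u_1$ as a Blaschke-type/degree-one map and invoking the area formula for such maps together with the normalization that $\bCP^1$ has area equal to its monotonicity constant; the claim is that this area comes out to $\tfrac14\Vol(\bCP^1)$, which by monotonicity is equivalent to quilted Maslov index $1$. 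The main obstacle is this last step: pinning down the correct area normalization for the quilt (as opposed to just the $\bCP^1$-strip $u_1$) and confirming the factor of one-fourth, which requires care about how the two patches $u_1,u_2$ contribute and about the conventions in \S4 of \cite{bw}; the other three conditions are routine once the decisive identity $\mu[u_2(x+i)] = -\tfrac16$ is in hand.
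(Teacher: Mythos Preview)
Your verification of the seam and boundary conditions is correct and matches the paper's argument essentially line-for-line: the paper checks $2|Z|^2 = |X|^2 + |Y|^2$ along $\im z = 1$ via the same arithmetic $2(x^2+25) = (x^2+49)+(x^2+1)$, and for the boundary simply notes that the first and third homogeneous coordinates of $u_2(x)$ are complex conjugates while the middle one is purely imaginary.

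The only place your proposal is incomplete is the area computation, which you correctly flag as the main obstacle but do not carry out. Two comments. First, the parenthetical ``the symplectic area of $u_2$ (equivalently, that of the associated quilt)'' is not right: the quilted area is the \emph{sum} $\int_{\{\im z\ge 1\}} u_1^*\omega_{\bCP^1} + \int_{\{0\le\im z\le 1\}} u_2^*\omega_{\bCP^2}$, and neither summand alone equals the total. Second, the ``degree-one Blaschke map'' shortcut you propose for $u_1$ does not apply cleanly, since $u_1$ is defined only on the half-plane $\{\im z\ge 1\}$, not on a closed disk, and there is no topological degree argument that produces the half-plane integral without actually integrating. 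The paper proceeds exactly by the direct integration you first suggest: writing out $\omega_{\bCP^n}$ in affine coordinates, pulling back, and computing
\[
\int_{\{\im z\ge 1\}} u_1^*\omega_{\bCP^1} = \tfrac15,
\qquad
\int_{\{0\le\im z\le 1\}} u_2^*\omega_{\bCP^2} = \tfrac3{10},
\]
for a total of $\tfrac12$, which is one-fourth of $\int_{\bCP^1}\omega_{\bCP^1} = 2$. So your plan is the right one; it just needs to be executed, and both patches must be integrated separately.
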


\begin{proof}
%{\bf Step 1: $\ul u_\theta = (u_{\theta,1},u_{\theta,2})$ satisfies the conditions of Question~\ref{q:acw}.}
%
%\medskip
First, we check the seam condition.
The only thing we need to check is the condition $2|Z|^2 = |X|^2+|Y|^2$ on $\bR + i$:
\begin{align}
2|(x+i)-6i|^2
&=
2(x^2 + 25)
=
(x^2+49) + (x^2+1)
=
|(x+i)+6i|^2 + |i(x+i)|^2.
\end{align}
Second, we check the boundary condition.
The zeroth and second components of $u_2(x) = [\pm(x+6i) : ix : \pm(x-6i)]$ are conjugates, and the first component is imaginary; $u_2(x)$ therefore lies in $L_\AC$.
Finally, we calculate the symplectic area.
Recall that the multiple $\om_{\bCP^n}$ of the Fubini--Study form having monotonicity constant 1 is given by the following formula:
\begin{gather}
\omega_{\bCP^n}
=
\frac {(n+1)i} {2\pi(1+|\ul z|^2)^2}\left(
\sum_{1\leq k\leq n} (1 + |\ul z|^2 - |z_k|^2) \d z_k\wedge\d\ol{z_k}
- \sum_{j \neq k} \ol{z_j}z_k\d z_j\wedge\d\ol{z_k}
\right).
\end{gather}
We now use these expressions for $\om_{\bCP^1}, \om_{\bCP^2}$ to compute the symplectic areas of $u_1$ and $u_2$:
\begin{gather}
\int_{y\geq 1} u_1^*\omega_{\bCP^1}
=
\int_{y\geq 1}
\Bigl(z \mapsto \pm\frac{z+6i}{iz}\Bigr)^*
\omega_{\bCP^1}
=
\int_1^\infty \int_{-\infty}^\infty \frac{18}{\pi(x^2+y^2+6y+18)^2}\d x\d y
= \frac 1 5,
\\
\int_{0\leq y\leq 1} u_2^*\omega_{\bCP^2}
=
\int_{0\leq y\leq1}
\Bigl(z \mapsto \pm\Bigl(\frac{z+6i}{iz},\frac{z-6i}{iz}\Bigr)\Bigr)^*
\om_{\bCP^2}
=
\int_0^1\int_{-\infty}^\infty \frac{72}{\pi(x^2+y^2+24)^2}\d x\d y
= \frac 3 {10}.
\nonumber
\end{gather}
The total symplectic area is therefore $1/2$, which is indeed one-fourth the area of $\bCP^1$: the area of $\bCP^1$ is equal to $c_1(T\bCP^1)$ evaluated on the fundamental class, which is 2.
%\medskip
%
%\noindent {\bf Step 2: Every quilt satisfying the conditions of Questions~\ref{q:acw} is of the form $\ul u_\theta$ for some $\theta$.}
%
%\medskip
%
%\noindent Suppose that $\ul v = (v_1,v_2)$ is a satisfactory quilt, and write $v_1 = [f:g]$, $v_2 = [f:g:h]$.
%(The equality of the zeroth and first components of $v_1$ and $v_2$ follow from the seam condition and the identity theorem.)
%We may assume $g = i$, hence $h(x) = \ol{f(x)}$ for $x$ in $\bR$.
%The functions $\ol{f\bigl(\ol z\bigr)}$ and $h(z)$ are both meromorphic and agree on $\bR$, hence $h(z) = \ol{f\bigl(\ol z\bigr)}$ everywhere.
%We have now deduced that our quilt is of the form $\left(\left[f(z):i\right],\left[f(z):i:\ol{f\bigl(\ol z\bigr)}\right]\right)$.
%Now we may extend it from a quilted half-plane to a quilted plane, by extending the domain of $v_1$ resp.\ $v_2$ to $\{z \:|\: |\im z| \geq 1\}$ and $\{z \:|\: |\im z| \leq 1\}$.
%Indeed, we define these extensions as follows, where we denote $v_1 \eqqcolon [v_1^0:v_1^1]$ and $v_2 \eqqcolon [v_2^0:v_2^1:v_2^2]$:
%\begin{gather}
%v_2(z)
%\coloneqq
%\begin{cases}
%\left[f(z):i:\ol{f\bigl(\ol z\bigr)}\right], & 0\leq \im z \leq 1,
%\\
%\left[f(z):i:\ol{f\bigl(\ol z\bigr)}\right], & 0\leq \im z \leq 1,
%\end{cases}
%\end{gather}
\end{proof}

To conclude this example, we illustrate the projections of these quilts to the moment triangle of $\bCP^2$.
Recall that the moment map of $\bCP^2$ is
\begin{gather}
\mu_{\bCP^2}\colon \bCP^2 \to \bR^2,
\qquad
\mu([X:Y:Z])
\coloneqq
\left(-\frac 1 2\frac {|Y|^2}{|X|^2+|Y|^2+|Z|^2},
-\frac 1 2\frac {|Z|^2}{|X|^2+|Y|^2+|Z|^2}\right),
\nonumber
\end{gather}
and that the image of $\mu_{\bCP^2}$ is the triangle with vertices $(0,0)$, $(-\tfrac12,0)$, and $(0,-\tfrac12)$.
We now compute the images of all the objects involved in this construction.
\begin{itemize}
\item First, we compute the projections of $\Lambda_{S^1}$ and $L_\AC$ to the moment triangle:
\begin{gather}
\mu_{\bCP^2}(\Lambda_{S^1})
=
\left\{
\left(-\frac13\frac{|Y|^2}{|X|^2+|Y|^2},-\frac16\right)\right\}
=
\text{line segment from } (-\tfrac13,-\tfrac16) \text{ to } (0,-\tfrac16),
\\
\mu_{\bCP^2}(L_\AC)
=
\left\{\left(-\frac12\frac{C^2}{A^2+B^2+C^2},-\frac14\left(1-\frac{C^2}{A^2+B^2+C^2}\right)\right)\right\}
=
\text{segment from } (-\tfrac12,0) \text{ to } (0,-\tfrac14).
\nonumber
\end{gather}

\item Next, we note that we can view the moment interval of $\bCP^1 = \mu_{\bCP^2}^{-1}(-\tfrac16)/S^1$ as a subset of the moment triangle of $\bCP^2$, by lifting $\mu_{\bCP^1}\colon \bCP^1 \to \bR$ to $\wt\mu_{\bCP^1}\colon \bCP^1 \to \bR^2$.
Indeed, $\mu_{\bCP^2}^{-1}(-\tfrac16)$ is characterized by the equation $2|Z|^2=|X|^2+|Y|^2$, so we can lift $\mu_{\bCP^1}$ to $\wt\mu_{\bCP^1}$ like so:
\begin{gather}
\wt\mu_{\bCP^1}([X:Y])
=
\left(-\frac 1 2\frac {|Y|^2}{|X|^2+|Y|^2+|Z|^2},
-\frac 1 2\frac {|Z|^2}{|X|^2+|Y|^2+|Z|^2}\right)
=
\left(-\frac13\frac{|Y|^2}{|X|^2+|Y|^2},-\frac16\right).
\end{gather}

\item Finally, we compute the projections of the images of $u_1$ and $u_2$:
\begin{gather}
\wt\mu_{\bCP^1}(u_1(z))
=
\left(-\frac16\frac{|z|^2}{|z+3i|^2+9},
-\frac16\right),
\qquad
\mu_{\bCP^2}(u_2(z))
=
\left(-\frac16\frac{|z|^2}{|z|^2+24},
-\frac16\frac{|z-6i|^2}{|z|^2+24}\right).
\end{gather}
The right boundary of the projection of the image of $u_2$ is the projection of $\{u_2(iy)\:|\:0\leq y\leq1\}$, which is the segment of the ellipse $Z(100x^2+16xy+16y^2+20x+8y+1)$ between $(0,-\tfrac14)$ and $(-\tfrac1{150},-\tfrac16)$.
Note that the projections of the image of $u_1$ and $u_2$ are doubly-covered: $(x,y)$ and $(-x,y)$ get sent to the same point.
\end{itemize}

In the figure below we show these projections: those of $\Lambda_{S^1}$, $L_\AC$, and the images of $u_1$ and $u_2$.

\begin{figure}[H]
\centering
\def\svgwidth{0.8\columnwidth}
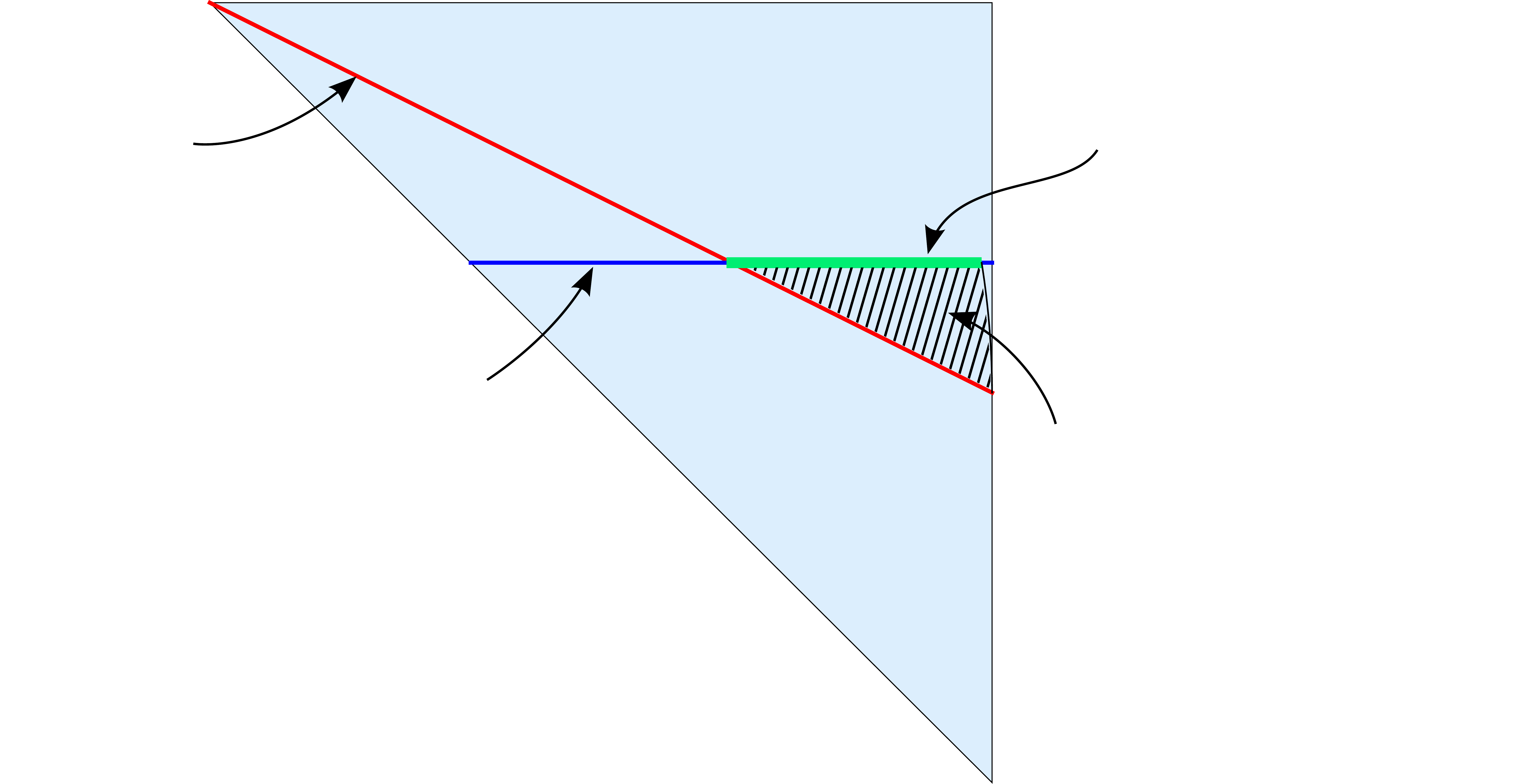
\caption{The large, light-blue triangle is the moment polytope of $\bCP^2$.
The diagonal red line segment is the projection of $L_\AC$.
The horizontal blue line segment (which extends all the way to the right side of the moment triangle) is the projection of $\Lambda_{S^1}$, so we can think of $\bCP^1$ as the preimage of the blue line quotiented by the action of $S^1$ that rotates the last coordinate.
The shaded region is the projection of the image of $u_2$, and the green line segment is the projection of the image of $u_1$.
Note that the shaded region only intersects the right edge at a single point, corresponding to $u_2(0) = [1:0:-1]$.}
\end{figure}

\end{document}